\definecolor{bl}{rgb}{0.0,0.2,0.6}
\def\printtitle{
    {\color{bl} \centering \Large \sc \textbf{\@title}\par}}		
\title{The Newtonian potential and \\the demagnetizing factors of the general ellipsoid}
\def\printauthor{
    {\centering \small  \@author}}				
\author{%
\vspace{20pt}
	{\sc Giovanni Di Fratta} \\
	\vspace{6pt}
	{\footnotesize CMAP, \'Ecole Polytechnique,\\ route de Saclay,\\ 91128 Palaiseau Cedex,\\ FRANCE} \\
	\vspace{20pt}
	}
\font
\newcommand{\mathd}{\mathrm{d}}
\newcommand{\nobracket}{}
\newcommand{\tmem}[1]{{\em #1\/}}
\newcommand{\tmmathbf}[1]{\ensuremath{\boldsymbol{#1}}}
\newcommand{\tmname}[1]{\textsc{#1}}
\newcommand{\tmop}[1]{\ensuremath{\operatorname{#1}}}
\newcommand{\tmtextit}[1]{{\itshape{#1}}}
\newcommand{\tmtextsl}[1]{{\slshape{#1}}}
\newcommand{\RR}{\mathbb{R}}
\newcommand{\NN}{\mathbb{N}}
\newtheorem{theorem}{\bf Theorem}[section]
\newtheorem{proposition}{\bf Proposition}[section]
\newtheorem{corollary}{\bf Corollary}[section]
\theoremstyle{remark}
\newtheorem{remark}{\bf Remark}[section]
\begin{document}
\printtitle 

\printauthor

\begin{abstract}
 The objective of this paper is to present a modern and concise new
  derivation for the explicit expression of the interior and exterior
  Newtonian potential generated by homogeneous ellipsoidal domains in $\RR^N$
  (with $N \geqslant 3$). The very short argument is essentially based on the
  application of {\tmname{Reynolds}} transport theorem in connection with
  {\tmname{Green-Stokes}} integral representation formula for smooth functions
  on bounded domains of $\RR^N$, which permits to reduce the $N$-dimensional
  problem to a 1-dimensional one. Due to its physical relevance, a separate
  section is devoted to the the derivation of the {\tmem{demagnetizing
  factors}} of the general ellipsoid which are one of the most fundamental
  quantities in ferromagnetism.
\end{abstract}
\vspace{20pt}

\section{Historical Introduction, Motivations}
The computation of the gravitational potential induced by an homogeneous
ellipsoid was one of the most important problems in mathematics for more than
two centuries after {\tmname{Newton}} enunciated his universal law of
gravitation
{\cite{beltrami1880sulla,daniele1911sul,daniele1911sull,newton1687philosophiae,rahman2001newtonian,shahgholian1991newtonian}}.

Once fixed an ellipsoidal domain $\Omega$ of $\RR^3$, the problem consists in
finding an {\tmem{explicit}} expression for the Newtonian potential induced by
a constant mass/charge density on $\Omega$ {\cite{dibenedetto2010partial}}:
\begin{equation}
  \mathcal{N}_{\Omega} [1_{\Omega}] (x) {\assign} 
  \frac{1}{4 \pi} \int_{\Omega} \frac{1}{| x - y |^{}} \mathd y,
  \label{eq:newtonianpotential}
\end{equation}
in the internal points of $\Omega$ ({\tmem{interior problem}}) and in the
exterior points of $\Omega$ ({\tmem{exterior problem}}).

In the case of a homogeneous {\tmem{spherically symmetric}} region,
{\tmname{Newton}} (in 1687) proved what nowadays is known as
{\tmname{Newton}}'s {\tmname{shell theorem}}
{\cite{arens1990newton,newton1687philosophiae}}: {\tmem{if $\Omega$ is an
homogeneous\footnote{By an {\tmem{homogeneous}} region of $\RR^N$ we mean an
open and connected subset of $\RR^N$ endowed by a constant density of
masses/charges.} spherical region centered at the origin, then for all $t >
1$, $\mathcal{N}_{t \Omega \backslash \Omega} [1_{t \Omega \backslash
\Omega}]$ is constant in $\Omega$, i.e.}} $t \Omega \backslash \Omega$ (the so
called {\tmem{hollow ball}}) {\tmem{induces no gravitational force inside}}
$\Omega$. {\tmem{Moreover, a spherically symmetric body affects external
objects gravitationally as though all of its mass were concentrated at a point
at its center.}}

For what concerns an ellipsoidal domain $\Omega$ of $\RR^3$, the
{\tmem{ellipsoidal}} {\tmem{interior}} problem was for the first time solved
by {\tmname{Gauss}} (in 1813) by the means of what it is in present days known
as the {\tmname{Gauss}} divergence theorem {\cite{gauss1813theoria}}. Later,
in 1839, {\tmname{Dirichlet}} proposed a solution of the interior problem
based on the theory of {\tmname{Fourier}} integrals {\cite{Kroneker1969}}. 

The
results of {\tmname{Gauss}} and {\tmname{Dirichlet}} can be summarized by
saying that if $\Omega$ is an ellipsoidal domain centered at the origin, then
the gravitational potential induced by an homogeneous ellipsoid in its
internal points is a second order polynomial. In other terms:
\begin{equation}
  \mathcal{N}_{\Omega} [1_{\Omega}] (x) = c - P_{} x \cdot x \quad \forall x
  \in \Omega, \label{eq:quadratic}
\end{equation}
for some constant $c \in \RR$ and some matrix $P \in \RR^{3 \times 3}$ whose
values can be expressed in terms of elliptic integrals
{\cite{KelloggB2010,stratton2007electromagnetic}}.

\begin{remark}
  For the sake of completeness we recall that the converse statement (the {\tmem{inverse
homogeneous ellipsoid problem}}) is also true
{\cite{dibenedetto1986bubble,dibenedetto2010partial,karp1994newtonian}},
namely: \tmtextsl{if $\Omega$ is a bounded domain of $\RR^N$ such that $\RR^N
\backslash \Omega$ is connected and} (\ref{eq:quadratic}) \tmtextsl{holds,
then $\Omega$ is an ellipsoid}. Historically speaking the inverse homogeneous
ellipsoid problem was for the first time solved by {\tmname{Dive}}
{\cite{dive1931attraction}} in 1931 for $N = 3$ and in 1932 by
{\tmname{H{\"o}lder}} {\cite{holder1932potentialtheoretische}} for $N = 2$. A
modern proof of this result can be found in {\tmname{DiBenedetto}} and
{\tmname{Friedman}} {\cite{dibenedetto1986bubble}} who, in 1985, extended it
to all $N \geqslant 2$. In 1994, {\tmname{Karp}} {\cite{karp1994newtonian}},
by the means of certain topological methods, obtained an alternative proof of
the inverse homogeneous ellipsoid problem.
\end{remark}

Despite the well-knownness of \eqref{eq:quadratic} in the mathematical and physical
community, and its importance in theoretical and applied studies
{\cite{alouges2009magnetization,beleggia2003computation,BrownB1962,DeSimoneA1993,DeSimoneA1995,di2011generalization,OsbornA1945}},
rigorous proofs of that result are not readily available in the literature: to
the best knowledge of the author, relative modern treatments of the interior
problem can be found in {\cite{KelloggB2010}} and
{\cite{stratton2007electromagnetic}}, and more recently in
{\cite{miloh1999note}} and {\cite{pohanka2011gravitational}} where also the
the exterior problem is investigated. However, in all the cited references,
the solution of the problem is always based on the use of ellipsoidal
coordinates which tends to focus the attention on the computational details of
the question rather than on its geometric counterpart. Eventually, modern
proofs of Newton's theorem and relation (\ref{eq:quadratic}), as well as far
reaching beautiful generalizations, can be found in {\cite{khavinson2013tale}}
where neverthless the problem of finding an anlytic expression for the
coeficients $c$ and $P$ is not touched.

Aim of this paper is to give a modern and concise derivation for the
expression of the {\tmem{interior}} and {\tmem{exterior}} Newtonian potential
(induced by homogeneous ellipsoids). The very short argument is essentially
based on the application of {\tmname{Reynolds}} transport theorem in
connection with {\tmname{Green-Stokes}} integral representation formula for
smooth functions on bounded domains of $\RR^N$. This approach permits to
reduce the $N$-dimensional problem to a 1-dimensional one, providing (in
particular) at once a proof of (\ref{eq:quadratic}) together with an explicit
expression of the coefficients $P$ and $c$ in terms of 1-dimensional
integrals. More precisely, the paper is organized as follows: 

Section \ref{sec:one} is devoted to the main result of the paper. We
  give a concise proof of the homogeneous ellipsoid problem. For completensee,
  we then derive {\tmname{Newton}}'s {\tmname{shell theorem}} as an immediate
  corollary. 
	
	In Section\ref{sec:two} we focus the attention to the three
  dimensional case. An expression in terms of the elliptic integrals is given
  for the coefficients of $P$. Due to its physical relevance, particular
  attention is paid to the eigenvalues of $P$. Indeed, when $N = 3$, the
  matrix $P$ and its eigenvalues, known in the theory of ferromagnetism
  respectively as {\tmem{the demagnetization tensor}} and {\tmem{the
  demagnetizing factors}}, are one of the most important and well-studied
  quantities of ferromagnetism
  {\cite{alouges2009magnetization,alouges2013liouville,beleggia2003computation,BrownB1962,di2011generalization,OsbornA1945}}.
  In fact, the following magnetostatic counterpart of the homogeneous
  ellipsoid problem holds: {\tmem{given a uniformly magnetized ellipsoid, the
  induced magnetic field is also uniform inside the ellipsoid}}. This result
  was for the first time showed by {\tmname{Poisson}}
  {\cite{Poisson1825second}}, while an explicit expression for the
  demagnetizing factors was for the first time obtained by {\tmname{Maxwell}}
  {\cite{Maxwell1873}}. Their importance is in that they encapsulate the
  self-interaction of magnetized bodies: their knowledge being equivalent to
  the one of the corresponding demagnetizing (stray) fields
  {\cite{beleggia2003computation}}.
\vspace{20pt}

\section{The Interior and Exterior Potential of an Homogeneous
Ellipsoid}\label{sec:one}
In what follows we denote by $\Omega$ the ellipsoidal domain of $\RR^N$ ($N
\geqslant 3$) having $a_1, a_2, \ldots, a_N$ as semi-axes lengths. We then
denote by $(\Omega_t)_{t \in [0, + \infty)}$ the family of ellipsoidal domains
of $\RR^N$, given by the inverse image $\phi_t^{- 1} (B_N)$ of the unit ball
of $\RR^N$ under the one parameter family of diffeomorphisms $\phi_t : x \in
\RR^N \mapsto \sqrt{A_t} x \in \RR^N$, where
\begin{equation}
  \sqrt{A_t} \assign \mathrm{diag} \left[ \frac{1}{\sqrt{a_1^2 + t}},
  \frac{1}{\sqrt{a_2^2 + t}}, \ldots, \frac{1}{\sqrt{a_N^2 + t}} \right] .
\end{equation}
Note that each diffeomorphism $\phi^{- 1}_t$ maps the unit ball of $\RR^N$
onto the ellipsoidal domain of $\RR^N$ defined by the position $\Omega_t =
\{ x \in \RR^N \; : \; | \phi_t (x) |^2 \leqslant 1\}$. In
particular $\partial \Omega_t =\{ x \in \RR^N \; : \; | \phi_t (x) |^2 =
1 \}$ and $\Omega \equiv \Omega_0$. Finally, we denote by
$\mathcal{N}_{\Omega_t} [1_{\Omega_t}]$ the Newtonian potential generated by
the uniform space density of masses or charges on $\Omega_t$:
\begin{equation}
  \mathcal{N}_{\Omega_t} [1_{\Omega_t}] (x) = c_N \int_{\Omega_t} \frac{1}{| x
  - y |^{N - 2}} \mathd \tau (y),
\end{equation}
with $c_N {\assign} [(N - 2) \omega_N]^{- 1}$ and
$\omega_N$ the surface measure of the unit sphere in $\RR^N$ (cfr.
{\cite{dautray2000mathematical,dibenedetto2010partial}}).

The main result of the paper is stated in the following:

\begin{theorem}
  Let $\Omega =\{ x \in \RR^N \; : \; | \phi_0 (x) |^2 \leqslant 1
  \}$ be the ellipsoidal domain of $\RR^N$ having $(a_1, a_2, \ldots,
  a_N) \in \RR^N_+$ as semi-axes lengths. For every $x \in \RR^N$
  \begin{equation}
    \mathcal{N}_{\Omega} [1_{\Omega}] (x) = \frac{1}{4}  \int_{\tau (x)}^{+
    \infty} \gamma_t  (1 - A_t x \cdot x) \mathd t, \qquad \gamma_t \assign
    \prod_{i = 1}^N \frac{a_i}{\sqrt{a_i^2 + t}}, \label{eq:potentialellipsn}
  \end{equation}
  where we have denoted by $\tau$ the non-negative real valued function
  \begin{equation}
    \tau : x \in \RR^N \mapsto \left\{ \begin{array}{ll}
      0 & \text{ if \ } x \in \bar{\Omega}\\
      \tau (x) & \text{ if \ } x \in \Omega^c \cap \partial \Omega_{\tau (x)}
    \end{array} \right. . \label{eq:indicatrixfunction}
  \end{equation}
\end{theorem}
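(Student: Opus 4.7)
The plan is to introduce, for each fixed $x \in \RR^N$, the one-parameter family $F(t) := \mathcal{N}_{\Omega_t}[1_{\Omega_t}](x)$ for $t \geqslant 0$, derive a first-order linear ODE satisfied by $F$, and integrate it from $0$ to $+\infty$. The ODE will arise from computing the same surface integral on $\partial\Omega_t$ in two independent ways: first via Reynolds' transport theorem, and second via Green's third identity.

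On the Reynolds side, implicit differentiation of the defining equation $A_t y \cdot y = 1$ of $\partial \Omega_t$ in $t$ identifies the outward normal velocity of the moving boundary as $v_n(y) = |A_t y|/2$, so that
$$F'(t) = \frac{c_N}{2} \int_{\partial \Omega_t} \frac{|A_t y|}{|x-y|^{N-2}} \, \mathd S(y).$$
On the Green-Stokes side, I would introduce the quadratic polynomial $v(y) := (A_t y \cdot y - 1)/2$, which is tailor-made to the geometry: it vanishes on $\partial \Omega_t$, its outward normal derivative there equals $|A_t y|$, and its Laplacian is the constant $\mathrm{tr}(A_t)$. Plugging these facts into Green's third identity (with the fundamental solution $c_N|z|^{-(N-2)}$ of $-\Delta$) produces a second expression for the same surface integral in terms of $F(t)$ and the value $v(x)\, 1_{\Omega_t}(x)$. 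Eliminating the surface integral between the two relations gives
$$F'(t) = \tfrac{1}{2}\mathrm{tr}(A_t) F(t) + \tfrac{1}{4}(A_t x \cdot x - 1)\, 1_{\Omega_t}(x).$$

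A direct computation from $\log \gamma_t = \sum_i \log a_i - \tfrac{1}{2}\sum_i \log(a_i^2 + t)$ yields $\gamma_t' = -\tfrac{1}{2}\gamma_t\, \mathrm{tr}(A_t)$, so $\gamma_t$ is the integrating factor that collapses the ODE to
$$\frac{\mathd}{\mathd t}\bigl[\gamma_t F(t)\bigr] = -\frac{\gamma_t}{4}(1 - A_t x \cdot x)\, 1_{\Omega_t}(x).$$
Integrating on $[0, +\infty)$ and using the initial value $\gamma_0 F(0) = \mathcal{N}_\Omega[1_\Omega](x)$, the decay $\gamma_t F(t) \to 0$ at infinity (which follows from $\gamma_t = O(t^{-N/2})$ versus the elementary growth estimate $F(t) = O(t)$ obtained by comparison with a large ball), together with the identification $1_{\Omega_t}(x) = 1_{\{t \geqslant \tau(x)\}}$, delivers the announced formula. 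Remark that this single argument handles uniformly the interior and exterior cases.

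The main technical subtlety lies in the Green-Stokes step when $x$ belongs to the interior of $\Omega_t$: there the fundamental solution $c_N|x-y|^{-(N-2)}$ is singular at $y = x$, so Green's identity must be applied on $\Omega_t \setminus B_\varepsilon(x)$ and one has to verify that the $\partial B_\varepsilon(x)$ contribution collapses in the limit $\varepsilon \to 0$ to exactly $v(x)$, thereby producing the factor $1_{\Omega_t}(x)$ in the ODE. Once this excision point is cleared, the Reynolds computation is routine (the smooth $t$-dependence of $\Omega_t$ is immediate from the definition of $\phi_t$) and the final integration is a one-liner.
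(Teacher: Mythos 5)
Your proposal follows the paper's argument essentially step for step: the Dirichlet-problem solution $v(y)=(A_t y\cdot y-1)/2$ you introduce is, up to a factor of two, the function $|\phi_t|^2-1$ used in the paper; the Reynolds-side and Green-Stokes-side computations of the same surface integral are exactly the paper's equations~\eqref{eq:reynoldstransport} and~\eqref{eq:repgreen}; eliminating the surface integral gives the same linear ODE with the same integrating factor $\gamma_t$. The only genuine addition is your explicit decay estimate $\gamma_t F(t)=O(t^{1-N/2})$ from $\gamma_t=O(t^{-N/2})$ and $F(t)=O(t)$, which is a cleaner justification of $\lim_{t\to\infty}\gamma_t\mathcal{N}_{\Omega_t}[1_{\Omega_t}]=0$ than the paper's appeal to ``decay at infinity of the Newtonian potential''.
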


\begin{remark}
  For general ellipsoidal domains the integral in (\ref{eq:potentialellipsn})
  can be evaluated by the means of the theory of elliptic integrals (cfr.
  {\cite{abramowitz1965handbook,prasolov1997elliptic}}). The function $\tau$
  defined by (\ref{eq:indicatrixfunction}) can be computed by solving the
  equation $\partial \Omega_{\tau (x)} = \{ x \in \RR^N \; : \; |
  \phi_{\tau (x)} (x) |^2 = 1 \}$. In particular, when $\Omega$ is a
  spherical region of radius $a$ (centered around the origin), the function
  $\tau$ reduces to the function equal to $| x |^2 - a^2$ if $x \in \RR^N
  \backslash \Omega$, zero otherwise, and the integral in
  (\ref{eq:potentialellipsn}) can be readily computed.
\end{remark}

\begin{proof}
  For every $t \in \RR^+$, the function $| \phi_t |^2 - 1$ is the unique
  solution of the homogeneous {\tmname{Dirichlet}} problem for the
  {\tmname{Poisson}} equation $u = 2 \tmop{tr} (A_t)$ in $\Omega_t$, $u \equiv
  0$ on $\partial \Omega_t$. Thus, according to the {\tmname{Green-Stokes}}
  representation formula (see {\cite{dibenedetto2010partial}}), for every $t
  \in \RR^+$ we have
  \begin{equation}
    (| \phi_t |^2 - 1) 1_{\Omega_t} = - 2 \tmop{tr} (A_t)
    \mathcal{N}_{\Omega_t} [1_{\Omega_t}] +\mathcal{S}_{\partial \Omega_t}
    [\partial_{\tmmathbf{n}} | \phi_t |^2] \quad \text{in } \RR^N \backslash
    \partial \Omega_t, \label{eq:repgreen}
  \end{equation}
  where we have denoted by
  \begin{equation}
    \mathcal{S}_{\partial \Omega_t} [\partial_{\tmmathbf{n}} | \phi_t |^2] (x)
    \assign c_N \int_{\partial \Omega_t} \frac{\partial_{\tmmathbf{n} (y)} |
    \phi_t (y) |^2}{| x - y |^{N - 2}} \mathd \sigma (y),
  \end{equation}
  the simple layer potential generated by the space density of masses or
  charges $\partial_{\tmmathbf{n}} | \phi_t |^2$ concentrated on $\partial
  \Omega_t$ (cfr. {\cite{dautray2000mathematical,dibenedetto2010partial}}).
  Next, we observe that the $N$-dimensional Newtonian kernel $| x |^{2 - N}$
  is in $W^{1, 1}_{\tmop{loc}} ( \RR^N )$, therefore due to
  {\tmname{Reynolds}} transport theorem,
  \begin{equation}
    \partial_t \mathcal{N}_{\Omega_t} [1_{\Omega_t}] = c_N \int_{\partial
    \Omega_t} \frac{\tmmathbf{v}_t (y) \cdot \tmmathbf{n} (y)}{| x - y |^{N -
    2}} \mathd \sigma (y) = \frac{1}{4} \mathcal{S}_{\partial \Omega_t}
    [\partial_{\tmmathbf{n}} | \phi_t |^2], \label{eq:reynoldstransport}
  \end{equation}
  where we have denoted by $\tmmathbf{v}_t \assign \partial_t [\phi_t^{- 1}]
  \circ \phi_t$ the eulerian velocity field associated to the motion $\phi^{-
  1}_t$, for which one has
  \begin{equation}
    \tmmathbf{v}_t (y) \assign \partial_t [\phi_t^{- 1}] \circ \phi_t (y) =
    \frac{1}{2} A_t y = \frac{1}{4} \nabla | \phi_t (y) |^2 \quad \forall (t,
    y) \in \RR^+ \times \partial \Omega_t . \quad
  \end{equation}
  Hence, substituting (\ref{eq:reynoldstransport}) into (\ref{eq:repgreen}) we
  get
  \begin{equation}
    \frac{1}{4} (| \phi_t (x) |^2 - 1) 1_{\Omega_t} (x) = - \frac{1}{2}
    \tmop{tr} (A_t) \mathcal{N}_{\Omega_t} [1_{\Omega_t}] (x) + \partial_t
    \mathcal{N}_{\Omega_t} [1_{\Omega_t}] (x) \quad \forall x \in \RR^N
    \backslash \partial \Omega_t . \label{eq:greenrepresent3}
  \end{equation}
  Moreover, by the assignment
  \begin{equation}
    \gamma_t \assign \exp \left( - \frac{1}{2} \int_0^t \tmop{tr} (A_s) \mathd
    s \right) = \prod_{i = 1}^N \frac{a_i}{\sqrt{a_i^2 + t}} \quad, \quad
    \gamma_0 = 1,
  \end{equation}
  the equality (\ref{eq:greenrepresent3}) reads as
  \begin{equation}
    \partial_t (\gamma_t \mathcal{N}_{\Omega_t} [1_{\Omega_t}]) = \frac{1}{4}
    \gamma_t (| \phi_t |^2 - 1) 1_{\Omega_t} \quad \forall x \in \RR^N
    \backslash \partial \Omega_t .
  \end{equation}
  Thus, once introduced the non-negative real function defined by
  (\ref{eq:indicatrixfunction}) we have, for every $t \in \RR^+$ and for every
  $x \in \RR^N$
  \begin{equation}
    \partial_t (\gamma_t \mathcal{N}_{\Omega_t} [1_{\Omega_t}]) (x) =
    \frac{1}{4} \gamma_t  (| \phi_t (x) |^2 - 1) 1_{[\tau (x), + \infty)} (t)
    . \label{eq:repgreen2}
  \end{equation}
  Integrating both members of (\ref{eq:repgreen2}) on $[0, + \infty)$; taking
  into account that due to the well-known decay at infinity of the Newtonian
  potential {\cite{SalsaB2010}} one has $\lim_{t \rightarrow + \infty}
  \gamma_t \mathcal{N}_{\Omega_t} [1_{\Omega_t}] = 0$; we finish with
  (\ref{eq:potentialellipsn}).
\end{proof}

\begin{corollary}
  $(\nobracket${\tmname{Newton's shell theorem}}$\nobracket)$ Let $\Omega
  \subseteq \RR^3$ be an homogeneous spherical region (centered around the
  origin) of radius $a$ and of total mass $M$. For every $x \in \RR^3
  \backslash \Omega$ the induced gravitational potential is the same as though
  all of its mass were concentrated at a point at its center. Moreover, for
  all $t > 1$, $\mathcal{N}_{t \Omega \backslash \Omega} [1_{t \Omega
  \backslash \Omega}]$ is constant in $\Omega$, i.e{\tmem{.}} the hollow ball
  induces no gravitational force inside $\Omega$.
\end{corollary}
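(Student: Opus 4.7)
The plan is to apply the main theorem in the case $N=3$ with $a_1=a_2=a_3=a$ and read off both assertions by an elementary integral computation. Under this specialization one has $A_t=(a^2+t)^{-1}I$, so $A_tx\cdot x=|x|^2/(a^2+t)$, and $\gamma_t=a^3/(a^2+t)^{3/2}$. The indicatrix $\tau$ in \eqref{eq:indicatrixfunction} is found by solving $|\phi_{\tau(x)}(x)|^2=1$, i.e.\ $|x|^2/(a^2+\tau(x))=1$, which yields $\tau(x)=\max\{|x|^2-a^2,\,0\}$. Substituting these ingredients into \eqref{eq:potentialellipsn} reduces everything to one-variable calculus.

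For the first assertion, take $x\in\RR^3\setminus\Omega$ and change variables $u=a^2+t$ in
\[
\mathcal{N}_\Omega[1_\Omega](x)
=\frac{a^3}{4}\int_{|x|^2-a^2}^{+\infty}\!\frac{1}{(a^2+t)^{3/2}}\Big(1-\frac{|x|^2}{a^2+t}\Big)\mathd t
=\frac{a^3}{4}\int_{|x|^2}^{+\infty}\!\big(u^{-3/2}-|x|^2u^{-5/2}\big)\mathd u.
\]
The elementary antiderivatives give $\mathcal{N}_\Omega[1_\Omega](x)=a^3/(3|x|)$, and since the total mass of the unit-density ball is $M=4\pi a^3/3$, this rewrites as $M/(4\pi|x|)$, which is precisely the potential of a point mass $M$ placed at the origin.

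For the shell assertion, I would exploit linearity of the Newtonian potential with respect to the density: for $t>1$,
\[
\mathcal{N}_{t\Omega\setminus\Omega}[1_{t\Omega\setminus\Omega}]
=\mathcal{N}_{t\Omega}[1_{t\Omega}]-\mathcal{N}_{\Omega}[1_\Omega].
\]
For $x\in\Omega\subset t\Omega$ both potentials are evaluated in the interior regime $\tau(x)=0$. Repeating the integral above from $0$ to $+\infty$ (equivalently from $u=a^2$, respectively $u=(ta)^2$, to $+\infty$) yields $\mathcal{N}_\Omega[1_\Omega](x)=a^2/2-|x|^2/6$ and $\mathcal{N}_{t\Omega}[1_{t\Omega}](x)=(ta)^2/2-|x|^2/6$. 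The $|x|^2$ terms cancel in the difference, leaving the constant $(t^2-1)a^2/2$, so the hollow ball exerts no gravitational force on any point of $\Omega$.

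The argument is almost mechanical once the theorem is in hand; the only delicate bookkeeping is to correctly identify $\tau$ in the spherical case and to keep the normalization constant $c_3=1/(4\pi)$ consistent with the total mass $M$. I do not foresee any genuine obstacle beyond the change of variables and the evaluation of the elementary antiderivative of $u^{-3/2}-|x|^2u^{-5/2}$.
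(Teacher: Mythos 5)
Your proof is correct and follows essentially the same route as the paper: specialize Theorem 2.1 to the spherical case, identify $\tau(x)=\max\{|x|^2-a^2,0\}$, evaluate the resulting elementary one-dimensional integral, and handle the hollow shell by writing $1_{t\Omega\setminus\Omega}=1_{t\Omega}-1_{\Omega}$ and exploiting linearity. The only cosmetic difference is that you compute both interior potentials explicitly before taking the difference, whereas the paper simply observes that the $|x|^2$-coefficient is independent of the radius so the difference must be constant; and you work with the bare Newtonian potential and reintroduce $G$, $\rho$, $M$ at the end, while the paper carries them throughout.
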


\begin{proof}
  We denote by $\rho 1_{\Omega} (x)$, ($\rho \assign \nobracket M / | \Omega
  |)$ the uniform density of mass in $\Omega$. The gravitational potential
  induced by $\rho$ in $\RR^3$ is then given by $u_{\rho} = 4 \pi
  G\mathcal{N}_{\Omega} [\rho] = 4 \pi G\mathcal{N}_{\Omega} [1_{\Omega}]$,
  where we have denoted by $G$ the {\tmem{gravitational constant}}. In this
  geometrical setting the function $\tau$ defined by
  (\ref{eq:indicatrixfunction}) reduces to the function $\tau (x) \assign (| x
  |^2 - a^2) 1_{\RR^N \backslash \Omega} (x)_{}$ and the integral in
  (\ref{eq:potentialellipsn}) immediately gives
  \begin{equation}
    u_{\rho} (x) = \frac{2}{3} \pi G \rho \left( \frac{a^2}{2} - | x |^2 
    \right) \quad \text{if } x \in \bar{\Omega} \quad, \quad u_{\rho} (x) =
    \frac{4}{3} \pi a^3 \frac{G \rho}{| x |} = \frac{G_{} M}{| x |} \quad
    \text{if } x \in \RR^3 \backslash \Omega .
  \end{equation}
  Thus, for every $x \in \RR^3 \backslash \Omega$ the induced gravitational
  potential is equal to the one induced by a {\tmname{Dirac}} mass
  concentrated in the center of $\Omega$. The gravitational field is given by
  $\tmmathbf{g} \assign - \nabla u_{\rho}$ and the fact that the hollow ball
  $t \Omega \backslash \Omega$, $t > 1$, induces no gravitational force inside
  $\Omega$ can be immediately seen by splitting the uniform density of mass in
  $t \Omega \backslash \Omega$ in the form $\rho = (M / | t \Omega \backslash
  \Omega |) 1_{t \Omega} - (M / | t \Omega \backslash \Omega |) 1_{\Omega}$.
  Indeed by linearity we get that $u_{\rho}$ is constant in $\Omega$ and
  therefore $\tmmathbf{g}=\tmmathbf{0}$.
\end{proof}

\section{The demagnetizing factors of the general ellipsoid}\label{sec:two}

We know focus on the three-dimensional framework ($N = 3$) and, in particular,
on the so-called demagnetizing factors of the general ellipsoid
{\cite{OsbornA1945}}. To this end we recall that the demagnetizing (stray)
field $\tmmathbf{h} [\tmmathbf{m}]$ associated to a magnetization
$\tmmathbf{m} \in C^{\infty} (\bar{\Omega})$ can be expressed as the gradient
field of a suitable magnetostatic potential $\varphi_{\tmmathbf{m}}$ (see
{\cite{FriedmanA1980,praetorius2004analysis}}). Precisely
$\varphi_{\tmmathbf{m}} \assign - \tmop{div} \mathcal{N}_{\Omega}
[\tmmathbf{m}]$ and $\tmmathbf{h} [\tmmathbf{m}] \assign - \nabla
\varphi_{\tmmathbf{m}}$ in $\RR^3$. In particular, if $\tmmathbf{m}$ is
constant in $\Omega$, then $\varphi_{\tmmathbf{m}} = -\tmmathbf{m} \cdot
\nabla \mathcal{N}_{\Omega} [1_{\Omega}]$. Thus from
$(\ref{eq:potentialellipsn})$:
\begin{equation}
  \varphi_{\tmmathbf{m}} (x) = P x \cdot \tmmathbf{m} \quad, \quad
  \tmmathbf{h} [\tmmathbf{m}] = - P_{} \tmmathbf{m} \quad \forall x \in
  \Omega,
\end{equation}
where we have denote by $P \assign \nabla \mathcal{N}_{\Omega} [1_{\Omega}]$
the diagonal matrix, known in literature as the {\tmem{demagnetizing tensor}},
whose diagonal $i$-entry (the $i$-th {\tmem{demagnetizing factor}}), by virtue
of $(\ref{eq:potentialellipsn})$, is given by
\begin{equation}
  P_i {\assign} \frac{1}{2} \int_0^{+ \infty}
  \frac{1}{(a_i^2 + t)}  \prod_{j = 1}^3 \frac{a_j}{\sqrt{a_j^2 + t}} \mathd t
  \qquad \forall i \in \NN_3 . \label{eq:demagfactors}
\end{equation}
\begin{proposition}
  We have $P_i \geqslant 0$ for every $i \in \NN_3$ and if $a_1 \geqslant a_2
  \geqslant a_3$ then $P_1 \leqslant P_2 \leqslant P_3$. The trace of $P$
  satisfy the relation $\tmop{tr} (P) = 1$.
\end{proposition}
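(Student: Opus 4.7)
The proposition has three assertions, and the plan is to address them in order of increasing substance. Parts one and two are essentially trivial pointwise comparisons of integrands, so the only interesting content is the trace identity, which I will attack by exploiting the ODE that already defines $\gamma_t$.

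For the non-negativity $P_i \geq 0$, I would simply remark that in the integrand of \eqref{eq:demagfactors} every single factor $1/(a_i^2+t)$ and $a_j/\sqrt{a_j^2+t}$ is strictly positive on $[0,+\infty)$, hence the integral is manifestly positive. For the monotonicity, note that the only part of the integrand depending on $i$ is the factor $1/(a_i^2+t)$, while $\gamma_t$ is symmetric in the $a_j$'s. So under the ordering $a_1\geq a_2\geq a_3$ we have $a_1^2+t \geq a_2^2+t \geq a_3^2+t$ for every $t\geq 0$, and therefore the corresponding reciprocals satisfy the reverse inequality pointwise, which integrates to $P_1\leq P_2\leq P_3$.

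The trace identity $\mathrm{tr}(P)=1$ is where the argument has content, but even here the heavy lifting has already been done in the proof of the main theorem, where $\gamma_t$ was introduced precisely as the solution of $\partial_t \gamma_t = -\tfrac12 \mathrm{tr}(A_t)\gamma_t$ with $\gamma_0=1$. The plan is to sum \eqref{eq:demagfactors} over $i\in\mathbb{N}_3$, pull the sum inside the integral, and recognize
\begin{equation*}
\sum_{i=1}^3 \frac{1}{a_i^2+t} = \mathrm{tr}(A_t),
\end{equation*}
so that the integrand becomes $\tfrac12\gamma_t\,\mathrm{tr}(A_t) = -\partial_t \gamma_t$. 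The integral then telescopes to $\gamma_0-\lim_{t\to+\infty}\gamma_t$.

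The only step that could be viewed as an obstacle is justifying that $\lim_{t\to+\infty}\gamma_t = 0$, but this is immediate from the explicit product formula $\gamma_t = \prod_{j=1}^3 a_j/\sqrt{a_j^2+t}$, which decays like $t^{-3/2}$. Combined with $\gamma_0=1$, one obtains $\mathrm{tr}(P) = 1 - 0 = 1$, completing the proof.
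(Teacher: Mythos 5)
Your proposal is correct throughout, but for the trace identity you take a genuinely different route than the paper. The paper's proof dispatches $\operatorname{tr}(P)=1$ by a purely PDE-level observation: since $\Delta\,\mathcal{N}_\Omega[1_\Omega]=-1_\Omega$, one has $\operatorname{tr}(P)=\operatorname{div}(Px)=-\Delta\,\mathcal{N}_\Omega[1_\Omega]=1$ inside $\Omega$, with no integral evaluated at all. You instead evaluate the integral directly: summing \eqref{eq:demagfactors} over $i$, recognizing $\sum_i (a_i^2+t)^{-1}=\operatorname{tr}(A_t)$, and using the defining ODE $\partial_t\gamma_t=-\tfrac12\operatorname{tr}(A_t)\gamma_t$ to write the integrand as $-\partial_t\gamma_t$, so the integral telescopes to $\gamma_0-\lim_{t\to\infty}\gamma_t=1$. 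The paper explicitly acknowledges that ``direct evaluation of the integrals'' is an available alternative before choosing the Poisson-equation argument; your telescoping trick is precisely the clean way to carry out that alternative. The paper's route is shorter and conceptually reveals \emph{why} the trace must be $1$ (it is a restatement of the Poisson equation, independent of the ellipsoidal geometry), whereas yours is self-contained at the level of formula \eqref{eq:demagfactors} and reuses structure already built in the proof of the main theorem, which makes it arguably more in the spirit of the paper's 1-dimensional reduction philosophy. Your treatment of the two ``obvious'' parts (pointwise positivity of the integrand; pointwise monotonicity of $(a_i^2+t)^{-1}$ under the ordering of the $a_i$'s) is exactly what the paper leaves unstated.
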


\begin{proof}
  The first statement is obvious. The relation $\tmop{tr} (P) = 1$ can of
  course be verified by a direct evaluation of the integrals in
  (\ref{eq:demagfactors}), but it is also possible to observe that since the
  Newtonian potential $\mathcal{N}_{\Omega} [1_{\Omega}]$ satisfies the
  {\tmname{Poisson}} equation $\Delta \mathcal{N}_{\Omega} [1_{\Omega}] = -
  1_{\Omega}$, one has $\tmop{tr} (P) = \tmop{div} (P_{} x) = - \Delta
  \mathcal{N}_{\Omega} [1_{\Omega}] = 1_{\Omega}$.
\end{proof}

Assuming $a_1 \geqslant a_2 \geqslant a_3$, from (\ref{eq:demagfactors}) and
the theory of elliptic integrals we get
\begin{eqnarray}
  P_1 & = & 1 - P_2 - P_3 \\
  P_2 & = & - \frac{a_3}{a_2^2 - a_3^2} \left[ a_3 - \frac{a_1 a_2}{(a_1^2 -
  a_2^2)^{1 / 2}} E \left( \arccos \left( \frac{a_2}{a_1} \right) \left|
  \frac{a_1^2 - a_3^2}{a_1^2 - a_2^2} \right. \right) \right] \\
  P_3 & = & + \frac{a_2}{a_2^2 - a_3^2} \left[ a_2 - \frac{a_1 a_3}{(a_2^2 -
  a_3^2)^{1 / 2}} E \left( \arccos \left( \frac{a_3}{a_1} \right) \left|
  \frac{a_1^2 - a_2^2}{a_1^2 - a_3^2} \right. \right) \right], 
\end{eqnarray}
where, for every $y \in \RR$ and every $0 < p < 1$ we have denoted by
\begin{equation}
  E (y | p \nobracket) \assign \int_0^y (1 - p \sin^2 \theta)^{1 / 2} \mathd
  \theta,
\end{equation}
the incomplete elliptic integral of the second kind expressed in
{\tmem{parameter form}} {\cite{abramowitz1965handbook,prasolov1997elliptic}}.
In particular, in the case of a {\tmem{prolate spheroid}} ($a_1 \geqslant a_2
= a_3$) we get
\begin{eqnarray}
 P_1 &=& - \frac{a_3^2}{(a_1^2 - a_3^2)^{3 / 2}} \left[ (a_1^2 - a_3^2)^{1 / 2}
  + a_1 \tmop{arccoth} \left( \frac{a_1}{(a_1^2 - a_3^2)^{1 / 2}} \right)
  \right] ,\\ 
P_2 &=& P_3 = \frac{1 - P_1}{2},
\end{eqnarray}
while in the case of an {\tmem{oblate spheroid}} ($a_1 = a_2 \geqslant a_3$)
\begin{eqnarray}
  P_1 &=& P_2 = \frac{1 - P_3}{2},\\ 
P_3 &=& \frac{a_1^2}{(a_1^2 -
  a_3^2)^{3 / 2}} \left[ (a_1^2 - a_3^2)^{1 / 2} + a_3 \arctan \left(
  \frac{a_3}{(a_1^2 - a_3^2)^{1 / 2}} \right) - a_3 \frac{\pi}{2} \right] .
\end{eqnarray}
Finally, in the case of a {\tmem{sphere}} ($a_1 = a_2 = a_3$) one finish with
$P_1 = P_2 = P_3 = \frac{1}{3}$.

\section{Acknowledgments}
This work was partially supported by the labex LMH through the grant ANR-11-LABX-0056-LMH in the ``\emph{Programme des Investissements d'Avenir}''.

%
%
%
%
%


\end{document}